\definecolor{cobalt}{RGB}{61,99,181}
\newtheorem{thm}{Theorem}[section]
\newtheorem{defi}[thm]{Definition}
\newtheorem{lem}[thm]{Lemma}
\newtheorem{prop}[thm]{Proposition}
\numberwithin{equation}{section}
\date{\today}
\newcommand{\Rmnum}[1]{\expandafter\@slowromancap\romannumeral #1@}
\begin{document}

\title[\(C^{*}(G,A)\)]{The inverse-closed subalgebra of \(C^{*}(G,A)\)}

\author[JianJun Chen]{JianJun Chen}
\address{
\textsuperscript{3}
College of Mathematics and Statistics, Chongqing University, Chongqing, 401331, P. R. China}
\email{202206021014@stu.cqu.edu.cn}

\keywords{Roe algebra with coefficients; Inverse-closed subalgebra;u-growth}

\begin{abstract}
 This paper studies the inverse-closed subalgebras of the Roe algebra with coefficients of the type \(l^2(G, A)\). The coefficient \(A\) is chosen to be a non-commutative \(C^*\)-algebra, and the object of study is \(C^*(G, A)\) generated by the countable discrete group \(G\). By referring to the Sobolev-type algebra, the intersection of a family of Banach algebras is taken. It is proved that the intersection \(W_a^{\infty}(G, A)\) of Banach spaces is a spectrally invariant dense subalgebra of \(C^*(G, A)\), and a sufficient condition for this is that the group action of \(G\) has polynomial growth.
\end{abstract} \maketitle

\section{Introduction}\label{Intro}

The Roe algebra is an indispensable object of study in cutting-edge research topics such as the theory of coarse embeddings, the index theorem for non-compact manifolds, and the classification of high-dimensional topological manifolds. 

As a very special class of subalgebras within the Roe algebra, inverse-closed algebras possess an inverse structure that is exactly the same as that of the parent algebra. This characteristic gives them unique advantages in related research. The \(K\)-groups of an inverse-closed subalgebra and the original \(C^*\)-algebra are isomorphic, that is, \(K_i(\mathcal{A})\cong K_i(\mathcal{B}),\quad i = 0,1\). This is because the property of being an inverse-closed dense subalgebra ensures the consistency of \(K\)-theory \cite{oyono2015quantitative}. Therefore, in many cases, a subalgebra with a simpler structure can be selected, making the process of computing the \(K\)-group easier. Moreover, spectrally invariant subalgebras can also preserve some properties in \(K\)-theory. Hence, it is of great importance to study the inverse-closed property of the Roe algebra with coefficients.

In Chapter 3, the inverse-closed subalgebras of \(C^*(G,A)\) are discussed. To study the inverse-closed subalgebras of \(G^*(G,A)\) with coefficient \(A\) and generated by the group \(G\), first, when the group action has polynomial growth, polynomial weighting is taken. The intersection of the family of Banach algebras \(\{W_a(G,A)\}\) is taken to construct \(W_a^{\infty}(G,A)\). It is proved that this subalgebra \(W_a^{\infty}(G,A)\) is an inverse-closed subalgebra of \(C^*(G,A)\).

\section{Preliminary}\label{S2}
Let \(G\) be a countable discrete group. Then there must exist a left-invariant metric on \(G\) such that \(G\) is a metric space with bounded geometry. That is, \(d_l(\gamma_1,\gamma_2) = l(\gamma_2^{-1}\gamma_1)\). Then \(d_{l}\) is the metric on \(G\) induced by \(l\), which means that \(d_l\) is left-invariant, and for all \(\gamma_1,\gamma_2,t\in G\), \(d_l(t\gamma_1,t\gamma_2) = d_l(\gamma_1,\gamma_2)\).

\begin{defi}
	As previously described, \((G, d_{l})\) is given. Here, \(\mathbb{A}\) is a non - commutative \(C^*\) algebra with a unit element. The pre - Roe algebra of \(G\) with coefficients is defined as follows:
	\[C[G,A]=\left \{ \phi:G\times G\to \mathbb{A}\mid \phi\text{ is a locally compact operator and has finite propagation}   \right \}.\]
	\begin{defi}
	The closure of \(C[G,A]\) under the operator norm is denoted as \(C^{*}(G,A)\), which is the Roe algebra with coefficients generated by the countable discrete group \(G\). That is
	\[C^{*}(G,A)=\overline{C[G,A]}^{\left \| \cdot  \right \|_{\mathcal{B}(\ell ^{2}(G,A))}}.\]
	\end{defi}
	For any \(\phi\in C^*(G,A)\), we equip  $\phi\ $with the norm . 
	\[\|T\|_{2}=\|T\|_{B(l^2(G,A))}\]
	And it is a bounded operator,in other words, it can be regarded as a bounded operator on \(\ell^{2}(G,A)\) through convolution. That is, \(\phi:\ell ^{2}(G,A)\longrightarrow \ell^{2}(G,A)\) and
	\[\phi\ast \eta (\gamma_1)=\sum _{\gamma_2\in G}\|\varphi(\gamma_1,\gamma_2)\|\eta(\gamma_2),\forall \gamma_1\in G.\]	
\end{defi} 
\begin{defi}
Define the truncation operator of 
$T _{n}=\{t_{n}(\gamma_1,\gamma_2)\}	$ :
	\begin{equation*}
		t_{n}(\gamma_1,\gamma_2)=\left\{
		\begin{aligned}
			t(\gamma_1,\gamma_2) & , &  d_{l} (\gamma_1,\gamma_2)\le n.\\
			0 & , & \text{others.}
		\end{aligned}
		\right.
	\end{equation*}
\end{defi}

\section{The spectra of Toeplitz operators}\label{spectra}
First, we present a Banach subalgebra.
\[ W_{a}(G,A)= \left \{ T:G\times G\longrightarrow \mathbb{A}\mid  \sqrt{\sum_{\gamma\in G} \Big ( \sup_{\left \{ \gamma\in G \right \} }\| t(\gamma )\|  \Big ) ^2 \left ( u(d(\gamma) \right )^{2}}<\infty  \right  \} . \]

\begin{thm} Define $W_{a}^{\infty }(G,A)$ :
\begin{align*}
	\left \| T \right \|_{a} &=  \sqrt{\sum_{\gamma\in G} \Big ( \sup_{\left \{ \gamma_1,\gamma_2\in G,\gamma_2^{-1}\gamma_1=\gamma\right \} }\|t\left ( \gamma_1,\gamma_2 \right )  \|   \Big ) ^2 \left ( 1+l(\gamma_2^{-1}\gamma_1) \right )^{2a}} \\
	&=\sqrt{\sum_{\gamma\in G} \Big ( \sup_{\left \{ \gamma\in G \right \} }\| t(\gamma )\|  \Big ) ^2 \left ( 1+l(\gamma) \right )^{2a}}
	=\|\tilde tu\|_2 .	
\end{align*}
\[u(\gamma_1,\gamma_2)=(1+d_{l}(\gamma_1,\gamma_2))^a.\quad a>0.\]
\[W_{a}^{\infty }(G,A)=\bigcap _{a=0}^{\infty}W_a(G,A).\]	
\end{thm}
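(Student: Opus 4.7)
The plan is to read the stated ``theorem'' primarily as a well-definedness result: the two expressions appearing in the displayed \texttt{align*} must agree, the quantity $\|T\|_a$ must be a genuine norm, and the intersection $W_a^{\infty}(G,A)=\bigcap_{a\ge 0}W_a(G,A)$ must make sense as a subspace of $C^{*}(G,A)$. I would present the argument in four short steps, relying on the left-invariant length function $l$ built in Section~\ref{S2} and on the Toeplitz-type reduction of kernels on $G\times G$ to functions on $G$.

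First, I would justify the passage from the double-variable supremum to the single-variable one. Because $d_l(\gamma_1,\gamma_2)=l(\gamma_2^{-1}\gamma_1)$ is left-invariant, the condition $\gamma_2^{-1}\gamma_1=\gamma$ picks out a single orbit under the diagonal left $G$-action, and for a kernel $t$ that descends to a function $\tilde t(\gamma)=t(\gamma_1,\gamma_2)$ on this quotient (the Toeplitz, or convolution, kernels governing $C^{*}(G,A)$), the supremum is attained uniformly along each orbit. Thus
\[
\sup_{\gamma_2^{-1}\gamma_1=\gamma}\|t(\gamma_1,\gamma_2)\|=\|\tilde t(\gamma)\|,
\]
which is exactly the identification used implicitly in the second line of the \texttt{align*}.

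Second, once the kernel has been reduced to $\tilde t$, the weight $u(\gamma_1,\gamma_2)=(1+d_l(\gamma_1,\gamma_2))^a$ likewise descends to $u(\gamma)=(1+l(\gamma))^{a}$, so the sum
\[
\sum_{\gamma\in G}\|\tilde t(\gamma)\|^{2}\,(1+l(\gamma))^{2a}
\]
is precisely the squared $\ell^{2}(G)$-norm of the pointwise product $\tilde t\,u$, giving the final rewriting $\|T\|_a=\|\tilde t u\|_2$. From this presentation, positivity, homogeneity, and the triangle inequality for $\|\cdot\|_a$ follow directly from the corresponding properties of $\|\cdot\|_2$, so $(W_a(G,A),\|\cdot\|_a)$ is a normed space; completeness is inherited from that of $\ell^{2}(G,\mathbb{A})$ after noting that the weight $u$ is strictly positive.

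Third, I would verify that $W_a^{\infty}(G,A)$ is well-defined as an intersection: if $a'\ge a\ge 0$ then $(1+l(\gamma))^{2a}\le (1+l(\gamma))^{2a'}$, so $\|T\|_a\le \|T\|_{a'}$ and $W_{a'}(G,A)\subseteq W_a(G,A)$. Hence the intersection over all $a\ge 0$ is a well-defined subspace carrying the countable family of norms $\{\|\cdot\|_a\}_{a\in\mathbb{N}}$, which gives it a natural Fr\'echet topology. The inclusion into $C^{*}(G,A)$ is automatic from the $a=0$ case, because $\|T\|_{\mathcal{B}(\ell^2(G,A))}\le \|T\|_{0}$ for Toeplitz kernels. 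The step I expect to be the main obstacle is the first one: making precise in what sense the kernel $t$ is determined by its values on the quotient $\gamma=\gamma_2^{-1}\gamma_1$, since this requires restricting attention to the translation-invariant (convolution) sub-class of $C[G,A]$ on which the two expressions for $\|T\|_a$ actually coincide; outside this class the supremum and the single-variable norm can disagree, and clarifying this restriction is what the proof really hinges on.
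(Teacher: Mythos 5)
The statement you are proving is really a definition dressed up as a theorem: the paper offers no proof of it, and your decision to read it as a well-definedness claim is the only sensible one. Two of your steps are fine (monotonicity $\|T\|_a\le\|T\|_{a'}$ for $a\le a'$, hence nested spaces and a well-defined intersection with a Fr\'echet structure; the norm axioms for $\|\cdot\|_a$ inherited from $\ell^2$). Your first step is more elaborate than necessary: the equality of the two displayed expressions is purely notational, since $\|t(\gamma)\|$ simply \emph{denotes} the supremum of $\|t(\gamma_1,\gamma_2)\|$ over the coset $\{\gamma_2^{-1}\gamma_1=\gamma\}$ and $l(\gamma_2^{-1}\gamma_1)=l(\gamma)$ is constant on that set; no restriction to translation-invariant (convolution) kernels is needed, and the paper does not make one --- the kernels in $C[G,A]$ are general finite-propagation kernels, and $\tilde t$ is a majorant, not an identification.

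The genuine gap is in your third step, where you assert that the inclusion $W_a^{\infty}(G,A)\subset C^{*}(G,A)$ is ``automatic from the $a=0$ case, because $\|T\|_{\mathcal{B}(\ell^2(G,A))}\le\|T\|_{0}$.'' This inequality is false for infinite groups: $\|T\|_0$ is an $\ell^2$-type bound on the diagonal suprema $\tilde t(\gamma)$, whereas Schur's test (or Young's inequality for convolution) requires $\ell^1$-type control, $\|T\|_{\mathcal{B}(\ell^2)}\le\sum_{\gamma}\tilde t(\gamma)$; on $G=\mathbb{Z}$ one can already build kernels with $\tilde t\in\ell^2\setminus\ell^1$ that are unbounded on $\ell^2$. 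This is exactly why the paper introduces the $u$-growth and $(u,2)$-decay conditions immediately after this statement: the subsequent Proposition proves $\|T\|_2\le C\|T\|_a$ by inserting the weight $u^{-1}$, applying Cauchy--Schwarz, and using polynomial growth of degree $m$ together with $m-2a<-2$ to make $\sum_{\gamma}(1+l(\gamma))^{-2a}$ converge. So the containment in $C^{*}(G,A)$ holds for elements of the intersection only because each lies in some $W_a$ with $a$ large relative to the growth exponent; it is a consequence of the growth hypothesis, not of the $a=0$ norm, and without that hypothesis your step would fail.
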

\begin{defi}
$G$ is said to have $u$-growth if $\#B(e,r)<cu(r)^t$, for some $c,t>0$.
and $G$ is said to have $(u,2)$-decay if $W_{a}^{\infty}(G,A) \subset C^*(G,A)$.
\end{defi}
\begin{prop}
When \(G\) has \(u\)-growth, \((G, d_l)\) is \((u, 2)\)-decaying, and the operator can be approximated in norm by the truncation operator.
\end{prop}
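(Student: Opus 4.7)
The plan is a Schur-test argument, with the $u$-growth hypothesis used to sum a weighted counting series. Given $T = (t(\gamma_1,\gamma_2)) \in W_a^{\infty}(G,A)$, set
\[
\tilde t(\gamma) \;=\; \sup_{\gamma_2^{-1}\gamma_1=\gamma}\|t(\gamma_1,\gamma_2)\|,
\]
so that by the theorem defining $\|\cdot\|_a$ we have $\|T\|_a = \|\tilde t\cdot(1+l)^a\|_{\ell^2(G)}$. For each $\gamma_1\in G$, left-invariance of $l$ gives
\[
\sum_{\gamma_2\in G}\|t(\gamma_1,\gamma_2)\| \;\le\; \sum_{\gamma\in G}\tilde t(\gamma) \;=\; \sum_{\gamma\in G}\bigl(\tilde t(\gamma)(1+l(\gamma))^a\bigr)\,(1+l(\gamma))^{-a},
\]
and Cauchy--Schwarz bounds this by $\|T\|_a\cdot S(a)^{1/2}$, where $S(a):=\sum_{\gamma\in G}(1+l(\gamma))^{-2a}$. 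The identical estimate applies to the column sums $\sum_{\gamma_1}\|t(\gamma_1,\gamma_2)\|$, so Schur's test produces
\[
\|T\|_{\mathcal{B}(\ell^2(G,A))} \;\le\; \|T\|_a\,S(a)^{1/2}.
\]

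Next I would control $S(a)$ via $u$-growth. Decomposing by metric shells and using $\#B(e,r)<cu(r)^t$,
\[
S(a) \;\le\; \sum_{n\ge 0}\#\{\gamma:l(\gamma)=n\}\,(1+n)^{-2a} \;\le\; c\sum_{n\ge 0}u(n+1)^t(1+n)^{-2a}.
\]
Because $T\in W_a^{\infty}(G,A)=\bigcap_{a>0}W_a(G,A)$, the parameter $a$ may be chosen as large as needed; taking $a$ big enough that $u(n+1)^t(1+n)^{-2a}$ is summable makes $S(a)<\infty$, giving $\|T\|_2<\infty$ and thus the continuous inclusion $W_a^{\infty}(G,A)\hookrightarrow \mathcal{B}(\ell^2(G,A))$.

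For the approximation claim I would rerun the same Schur/Cauchy--Schwarz estimate on the residue $T-T_n$, whose kernel is supported on $\{(\gamma_1,\gamma_2):d_l(\gamma_1,\gamma_2)>n\}$. The row and column sums are then dominated by
\[
\|T\|_a\cdot\Bigl(\sum_{l(\gamma)>n}(1+l(\gamma))^{-2a}\Bigr)^{1/2},
\]
which is a tail of the convergent series $S(a)$ and hence vanishes as $n\to\infty$. Each $T_n$ has propagation at most $n$, and truncation preserves the local-compactness assumed of $T$, so $T_n\in C[G,A]$; therefore $T_n\to T$ in operator norm places $T$ in $\overline{C[G,A]}^{\|\cdot\|_2}=C^*(G,A)$. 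This simultaneously establishes $(u,2)$-decay and the truncation approximation.

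The main obstacle is the summability of $S(a)$: this is precisely where $u$-growth is essential, converting a counting estimate on balls into a convergent weighted series once $a$ is taken large compared with the pair $(u,t)$. A secondary subtlety is that applying Schur's test in the $C^*(G,A)$ setting requires treating the $\mathbb{A}$-valued entries through their operator norms $\|t(\gamma_1,\gamma_2)\|$, which is legitimate here because the action described in the preliminaries is already defined through these scalar bounds.
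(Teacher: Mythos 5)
Your argument is correct and is essentially the paper's: both hinge on a weighted Cauchy--Schwarz estimate with the weight $u=(1+l)^a$, reducing everything to the summability of $\sum_{\gamma}(1+l(\gamma))^{-2a}$ via the shell decomposition and the ball-counting bound, and then reusing the same tail estimate for $T-T_n$. Packaging the kernel estimate as Schur's test on row and column sums rather than bounding $\|T\xi\|_2^2$ directly is only a presentational difference.
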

\begin{proof}
	We only need make $m-2a<-2$. 
	\begin{align*}
	\left \| T \xi  \right \|_2^{2} & =\Big( \sum _{\gamma_1\in G} \Big| \sum_{\gamma_2\in G}||t (\gamma_1,\gamma_2) ||(u(\gamma_1,\gamma_2))(u(\gamma_1,\gamma_2))^{-1}\Big |\xi (\gamma_2) \Big|^{2}\Big)\\
	& \le \sum _{\gamma_1\in G}\Big( \Big| \sum_{\gamma_2\in G}||t (\gamma_1,\gamma_2) ||(u(\gamma_1,\gamma_2))\Big |^2\Big)(\sum _{\gamma_2\in G}(u(\gamma_1,\gamma_2))^{-1}\Big |\xi (\gamma_2) \Big|)^{2}\\
	&= \sum _{\gamma_1\in G} \sum_{\gamma_2\in G}\left \|t (\gamma_1,\gamma_2)\ | \right | ^{2}(1+l(\gamma_2^{-1}\gamma_1))^{2a} \sum_{\gamma_2\in G}(1+l(\gamma_2^{-1}\gamma_1))^{-2a}\left | \xi (\gamma_2) \right | ^{2}\\
	&\le \|T\|_a^2\sum_{\gamma_1} \sum_{\gamma_2} (1+l(\gamma_2^{-1}\gamma_1))^{-2a}\left | \xi (\gamma_2) \right | ^{2}.\\
	&\le  \|T\|_a^2 \sum_{\gamma_2} \left | \xi (\gamma_2) \right | ^{2} \sum_{\gamma_1}\sum _{\gamma_2}(1+l(\gamma_2^{-1}\gamma_1))^{-2a}\\
	&= \|T\|_a^2 \left \| \xi  \right \|_2^{2} \sum_{\gamma_2}\sum_{\gamma_1} (1+l(\gamma_2^{-1}\gamma_1))^{-2a}\\
	&\le \|T\|_a^2\left \| \xi  \right \|_2^{2}\sum_{n=0}^{\infty\gamma_2 } \sum _{n\le l(\gamma)<n+1}(1+l(\gamma))^{-2a}\\
	&\le \|T\|_a^2 \left \| \xi  \right \|_2^{2}\sum_{n=0}^{\infty } \left( \left | B(\gamma,n+1) \right |   \right)  (1+n)^{-2a}\\
	&\le \|T\|_a^2 \left \| \xi  \right \|_2^{2}\sum_{n=0}^{\infty } \left( 1+n   \right)^{d(G,d_l)-2a}\le \|T\|_a^2\left \| \xi  \right \|_2^{2}\sum_{n=0}^{\infty } \left( 1+n   \right)^{m-2a}\\
	&\le \|T\|_a^2\left \| \xi  \right \|_2^{2}\sum_{n=0}^{\infty }(1+n)^{-2} \le C_0^2 \|T\|_a^2 \left \| \xi  \right \|_2^{2}.
\end{align*}
then,$\left \| T \right \|_{2}\le\left \| T \right \|_{a}$.
	\begin{align*}
	\left \| T -T_{n} \right \| _{a}^{2}&\le C\left \| T  \right \| _{a+r}^{2}\sum_{n=m+1}^{\infty } (1+n)^{m-2}\\
	&\le C\left \| T  \right \| _{a+r}^{2}\sum_{n=m+1}^{\infty } (1+n)^{-2}\\
	&<\varepsilon.    
\end{align*}
\end{proof}
\begin{lem}
$\|T^n\|_a\le C\|T\|_2^\alpha\cdot\|T\|_a^\beta$.	
\end{lem}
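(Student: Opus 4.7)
The plan is to derive a mixed submultiplicative estimate of the form
\[
\|TS\|_a \le C_{a}\bigl(\|T\|_a\,\|S\|_{2} + \|T\|_{2}\,\|S\|_a\bigr),
\]
and then iterate it on $T^{n}=T\cdot T^{n-1}$; this will give $\|T^n\|_a\le C_{a}\,n\,\|T\|_a\,\|T\|_{2}^{\,n-1}$, of the claimed form with $\alpha=n-1$ and $\beta=1$ up to a polynomial factor in $n$. The point of this Bernstein-type inequality is that it forces $\limsup_{n}\|T^{n}\|_{a}^{1/n}\le\|T\|_{2}$, which is the key ingredient for the inverse-closedness of $W_{a}^{\infty}(G,A)$ in $C^{*}(G,A)$.

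To get the mixed inequality, I would first pass to the convolution level as in the proof of Proposition~3.3: the pointwise bound $\widetilde{TS}(\gamma)\le(\tilde t\ast\tilde s)(\gamma)$ holds on $G$, where $\tilde t(\gamma)=\sup\{\|t(\gamma_{1},\gamma_{2})\|:\gamma_{2}^{-1}\gamma_{1}=\gamma\}$. Subadditivity of the length together with a power-mean inequality yields, for every factorisation $\gamma=\eta\cdot\eta^{-1}\gamma$,
\[
(1+l(\gamma))^{a} \le C_{a}\bigl((1+l(\eta))^{a}+(1+l(\eta^{-1}\gamma))^{a}\bigr),
\]
so that the weight can be loaded onto a single factor at a time. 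Young's convolution inequality $\ell^{2}\ast\ell^{1}\to\ell^{2}$ on $G$ then gives
\[
\|TS\|_a \le C_{a}\bigl(\|T\|_a\,\|\tilde s\|_{1}+\|\tilde t\|_{1}\,\|S\|_a\bigr).
\]
The $\ell^{1}$-factors are then upgraded to operator norm by a truncation at scale $R$: decompose $T=T_{R}+(T-T_{R})$ as in Definition~2.3, bound $\|\tilde t_{R}\|_{1}\le|B(e,R)|^{1/2}\|T_{R}\|_{2}$ on the near-diagonal block by Cauchy--Schwarz, and control the tail by
\[
\|\tilde t-\tilde t_{R}\|_{1}\le\|T\|_{a}\Bigl(\sum_{l(\gamma)>R}(1+l(\gamma))^{-2a}\Bigr)^{1/2}.
\]
Under the $u$-growth hypothesis and the condition $m-2a<-2$ already used in Proposition~3.3, the tail decays polynomially in $R$, and optimising $R$ produces $\|\tilde t\|_{1}\le C\,\|T\|_{2}^{\theta}\,\|T\|_{a}^{1-\theta}$ for some $\theta\in(0,1)$ depending on $a$ and the growth exponent.

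The main obstacle I foresee is controlling how the constants aggregate through the $n-1$ iterations: a naive iteration produces an exponential factor $C_{a}^{\,n}$, which is useless for Bernstein-type conclusions. The remedy is to expand $T^{n}$ in one shot as an $n$-fold convolution of $\tilde t$ on $G$ and redistribute the weight only once via the $n$-variable power-mean bound $(1+l(\eta_{1}\cdots\eta_{n}))^{a}\le n^{a-1}\sum_{i}(1+l(\eta_{i}))^{a}$, so that $C_{a}$ and the resulting $n^{a-1}$ appear a single time and the overall overhead stays polynomial in $n$. The $u$-growth assumption is used throughout, both to guarantee summability of the tails in the $\ell^{1}$-to-operator-norm passage and to underpin the inclusion $W_{a}^{\infty}(G,A)\subset C^{*}(G,A)$ from Proposition~3.3.
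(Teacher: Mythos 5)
The paper states this lemma with no proof at all, so there is nothing to compare your argument against; it has to stand on its own. Your overall strategy --- bound $\widetilde{T^n}$ by an $n$-fold convolution of $\tilde t$, split the weight by subadditivity of $l$ so that it lands on one factor at a time, apply Young's inequality $\ell^2\ast\ell^1\to\ell^2$, and then trade $\|\tilde t\|_1$ for the operator norm by truncation plus optimisation in $R$ --- is the standard and correct route (Brandenburg's trick for differential subalgebras), and it is exactly the kind of estimate Theorem~\ref{spectral} needs.

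Two steps, however, do not work as written. First, Cauchy--Schwarz gives $\|\tilde t_R\|_1\le|B(e,R)|^{1/2}\|\tilde t_R\|_{\ell^2}$, but the further bound $\|\tilde t_R\|_{\ell^2}\le\|T_R\|_2$ is false in general: $\tilde t(\gamma)=\sup_{\gamma_2}\|t(\gamma\gamma_2,\gamma_2)\|$ is a supremum along each diagonal, so $\sum_\gamma\sup_{\gamma_2}\|t(\gamma\gamma_2,\gamma_2)\|^2$ dominates, rather than is dominated by, the column sums that the operator norm controls. Use instead the entrywise bound $\tilde t(\gamma)\le\|T\|_2$, which gives $\|\tilde t_R\|_1\le|B(e,R)|\,\|T\|_2$; this costs an extra power of $|B(e,R)|$, but under $u$-growth that is still polynomial in $R$ and the optimisation over $R$ survives with a different exponent $\theta$. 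Second, and more seriously, your one-shot expansion yields $\|T^n\|_a\le n^{a}\|\tilde t\|_1^{\,n-1}\|T\|_a$, and after substituting $\|\tilde t\|_1\le C\|T\|_2^{\theta}\|T\|_a^{1-\theta}$ the exponent of $\|T\|_a$ is $1+(n-1)(1-\theta)$, not $1$: the Bernstein form $\|T^n\|_a\lesssim n\,\|T\|_2^{n-1}\|T\|_a$ announced in your first paragraph does not follow from the argument you sketch, and with it neither does $\limsup_n\|T^n\|_a^{1/n}\le\|T\|_2$. The standard repair is to iterate the two-factor estimate dyadically rather than linearly: $\|T^{2k}\|_a\le C\|T^{k}\|_2^{\theta}\|T^{k}\|_a^{\,2-\theta}$ applied along $n=2^j$ gives $\limsup_n\|T^n\|_a^{1/n}=r_{C^*}(T)$, which is what the Neumann-series argument in Theorem~\ref{spectral} actually requires (there $\|A\|_2<1$, so $\sum_i\|A^i\|_a$ converges). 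With those two modifications your outline becomes a complete proof.
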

\begin{thm}\label{spectral} 
	\(W_{a}^{\infty}(G,A)\) is a dense inverse-closed Banach subalgebra of \(C^{*}(G,A)\).
\end{thm}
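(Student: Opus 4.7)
The plan is to establish the three defining properties of an inverse-closed dense Banach subalgebra separately: density, the algebra property, and spectral invariance.

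For \emph{density}, I would observe that every operator of finite propagation lies in $W_a(G,A)$ for every $a>0$, since the supremum defining $\tilde t(\gamma)=\sup_{\gamma_2^{-1}\gamma_1=\gamma}\|t(\gamma_1,\gamma_2)\|$ is supported on a bounded set. Hence the pre-Roe algebra $C[G,A]$ is contained in $W_a^\infty(G,A)$, and as $C[G,A]$ is dense in $C^*(G,A)$ by definition, density follows immediately.

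For the \emph{algebra} property, I would use the submultiplicativity of the weight $u(\gamma)=(1+l(\gamma))^{a}$, a direct consequence of the triangle inequality for the left-invariant length, together with a Young-type convolution estimate on the rearranged kernel $\tilde t$: the composition $ST$ has rearranged kernel bounded pointwise by the convolution $\tilde s * \tilde t$ on $G$, and one obtains a mixed bound of the shape $\|ST\|_a\le \|S\|_{a+r}\|T\|_{a+r}$ for a suitable $r$ depending on the dimension exponent from the $u$-growth condition. This is enough to see that $W_a^\infty(G,A)=\bigcap_a W_a(G,A)$ is closed under multiplication; completeness of the natural Fréchet topology generated by $\{\|\cdot\|_a\}_{a\in\N}$ is automatic from completeness of each $W_a$.

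The heart of the proof is \emph{spectral invariance}, and for this I would invoke the preceding lemma $\|T^n\|_a\le C\,\|T\|_2^{\alpha}\|T\|_a^{\beta}$, whose intended reading is $\alpha+\beta=n$ with $\alpha/n\to 1$ as $n\to\infty$. Taking $n$-th roots yields
\[
 \limsup_{n\to\infty}\|T^n\|_a^{1/n}\ \le\ \lim_{n\to\infty}C^{1/n}\,\|T\|_2^{\alpha/n}\,\|T\|_a^{\beta/n}\ =\ \|T\|_2,
\]
while the reverse inequality is precisely the bound $\|T\|_2\le\|T\|_a$ proved in the preceding proposition. Consequently the spectral radius computed inside $W_a(G,A)$ agrees with the $C^*$-algebra spectral radius on every self-adjoint $T$. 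A Hulanicki-type criterion then applies: a Banach $*$-algebra continuously included in a $C^*$-algebra and having matching spectral radii on self-adjoint elements is inverse-closed. Applying this to each $W_a(G,A)$ and intersecting over $a$ gives the inverse-closedness of $W_a^\infty(G,A)$.

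The main obstacle is controlling the exponents $\alpha,\beta$ in the Bost-type lemma. If the lemma only produces $\alpha/n\to c<1$, the spectral-radius identification collapses and one has to iterate, typically via a Nelson-style commutator/duplication trick, to recover the correct asymptotic $\alpha/n\to 1$. Verifying that the stated lemma genuinely delivers this asymptotic, with constants uniform enough to pass to the Fréchet intersection, is the most delicate bookkeeping step.
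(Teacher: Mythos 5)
Your proposal is correct in outline and rests on the same engine as the paper --- the Bost/Brandenburg-type estimate $\|T^n\|_a\le C\|T\|_2^{\alpha}\|T\|_a^{\beta}$ of the preceding lemma --- but it exits through a different door. The paper argues constructively: it reduces to the positive element $T^*T$ via $T^{-1}=(T^*T)^{-1}T^*$, rescales to $A=I-\tfrac{2}{M+N}T^*T$ so that $\|A\|_2\le\tfrac{N-M}{M+N}<1$, and then sums the Neumann series $\sum_i A^i$ in the $\|\cdot\|_a$-norm, using the lemma to make the series converge. You instead take $n$-th roots in the same lemma to identify the spectral radius of a self-adjoint element computed in $W_a(G,A)$ with its $C^*$-spectral radius, and then invoke Hulanicki's criterion. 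Both routes are standard in the Wiener-lemma literature and both need exactly the asymptotic $\beta/n\to 0$ that you flag as the delicate point (the paper needs it too: without it the displayed bound $\sum_i C\|T\|_2^{\alpha}\|T\|_a^{\beta}<\infty$ does not follow, and as printed the paper's sum is truncated at a finite $k$, which cannot control the full Neumann series). The paper's approach buys explicit, in principle quantitative, bounds on $\|T^{-1}\|_a$ in terms of $M$, $N$ and $\|T\|_a$; yours is softer but cleaner, avoids the positivity rescaling, and --- a genuine improvement --- actually addresses the density of $C[G,A]$ in $W_a^\infty(G,A)$ and the submultiplicativity/completeness questions that the paper passes over with ``it suffices to prove the inverse-closed property.'' One small caution on your side: the weighted Young estimate in the form $\|ST\|_a\le\|S\|_{a+r}\|T\|_{a+r}$ only shows $W_{a+r}\cdot W_{a+r}\subset W_a$; to apply Hulanicki to a fixed $W_a$ you should note that for $a$ larger than half the growth exponent one in fact has $\|ST\|_a\le C\|S\|_a\|T\|_a$ (via $\ell^2_a\subset\ell^1$), so that $W_a$ is a genuine Banach $*$-algebra for large $a$, which suffices after intersecting over $a$.
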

\begin{proof}
	It suffices to prove its inverse-closed property. Let \(Inv~ W^\infty(G,A)\) be the set of all invertible elements of \(W^\infty(G,A)\). For any \(T\in W_{a}^{\infty}(G,A)\), then \(T^{-1}\in C^*(G,A)\). If \(T^{-1}\in Inv~ W^\infty(G,A)\), thus \(W_{a}^{\infty}(G,A)\) is inverse-closed.
	\[Inv W^\infty(G,A)=\{T:T,T^{-1}\in W_{a}^{\infty}(G,A),\text{ and }T\cdot T^{-1}=T^{-1}\cdot T=I\}\]
	For any \(T\in W_{a}^{\infty}(G,A)\), the corresponding adjoint matrix is \(T^{*}=(t(\gamma_1,\gamma_2)^* )_{\gamma_1,\gamma_2\in G}\). Due to the definition of the operator, we have \(T^{-1}=(T^{*}T)^{-1}T^{*} \in C^*(G,A)\). And we have
	\[\|T\|_a=\|T^*\|_a<\infty,\|T^*T\|_a=\|TT^*\|_a<\infty\]
	Thus \(T^*,TT^*,T^*T\in W_{a}^{\infty}(G,A)\). It only needs to prove that \((T^*T)^{-1}\in Inv~ W^\infty(G,A)\).
	
	Obviously, \(T^{*}T\) is a positive operator in \(\mathcal{B} (\ell^{2}(G,A))\). Thus, there must exist constants \(M > 0\), \(N > 0\) such that \(MI\leq T^{*}T\leq NI\), where \(I\) is the identity element of \(\mathcal{B} (\ell^{2}(G,A))\).
	Let
	\begin{equation}
		A = I - \frac{2T^{*}T}{M + N}
	\end{equation}
	Thus the matrix \(A\in \mathcal{B} (\ell^{2}(G,A))\), and
	\begin{equation}\label{2}
		\left \| I - \frac{2T^{*}T}{M + N} \right \|_{2}\leq\frac{N - M}{M + N} < 1\quad \text{and}\quad \left \| I - \frac{2T^{*}T}{M + N} \right \|_{a}<\infty
	\end{equation}
	Also, since
	\((I - A)(I + A + A^2 + A^3 + \cdots)=I\),
	\((I + A + A^2 + A^3 + \cdots)(I - A)=I\)\\
	From (\ref{2}), we get that \((I - A)^{-1}\) exists, and obtain the following estimate
	\begin{align*}
		\left \|( I - A)^{-1} \right \| _{a} &\leq \left \| \sum_{i = 0}^{\infty }A^{i}  \right \|_{a} 
		\leq \sum_{i = 0}^{k}C\|T\|_2^\alpha\cdot\|T\|_2^{\beta}
		<\infty.
	\end{align*}
	Thus \((T^{*}T)^{-1}\in Inv~ W^\infty(G,A) \),
	\(T^{-1}=(T^{*}T)^{-1}T^{*} \in Inv~ W^\infty(G,A)  \), and \(T\) is arbitrarily chosen.
	So it is proved that \(W_{a}^{\infty}(G,A)\) is an inverse-closed subalgebra of \(C^{*}(G,A)\).
\end{proof}

\end{document}